\documentclass[11pt]{amsart}
\usepackage{geometry}                
\geometry{a4paper}                   
\usepackage{graphicx}
\usepackage{amssymb}
\usepackage{amsthm}
\usepackage{mathrsfs}
\usepackage{epstopdf}
\usepackage{MnSymbol}
\usepackage{stmaryrd}
\usepackage{verbatim}
\usepackage[all]{xy}
\DeclareGraphicsRule{.tif}{png}{.png}{`convert #1 `dirname #1`/`basename #1 .tif`.png}

\title{Twistor Spaces of Hypercomplex Manifolds are Balanced}
\author{Artour Tomberg}
\thanks{The author would like to thank his research supervisor Misha Verbitsky for suggesting the problem and generously helping in the preparation of this manuscript. The article was prepared within the framework of the Academic Fund Program at the National Research University Higher School of Economics (HSE) in 2015 (grant 15-05-0033)  and supported within the framework
of a subsidy granted to the HSE by the Government of the Russian Federation for the implementation of the Global Competitiveness Program.}
\address{Faculty of Mathematics, National Research University Higher School of Economics, 
7 Vavilova Str., Moscow, Russia, 117312}
\address{Department of Mathematics and Statistics, McGill University, 805 Sherbrooke Street West, Montreal, Quebec,  Canada, H3A 0B9}
\email{artour@tomberg.com}

\DeclareMathOperator{\End}{End}
\DeclareMathOperator{\Hom}{Hom}

\DeclareMathOperator{\Imag}{Im}
\DeclareMathOperator{\id}{id}

\begin{document}

\begin{abstract}
A hypercomplex structure on a differentiable manifold consists of three integrable almost complex structures that satisfy quaternionic relations. If, in addition, there exists a metric on the manifold which is Hermitian with respect to the three structures, and such that the corresponding Hermitian forms are closed, the manifold is said to be hyperk\"ahler. In the paper ``Non-Hermitian Yang-Mills connections'' \cite{kaled-verbit}, Kaledin and Verbitsky proved that the twistor space of a hyperk\"ahler manifold admits a balanced metric; these were first studied in the article ``On the existence of special metrics in complex geometry'' \cite{michel} by Michelsohn. In the present article, we review the proof of this result and then generalize it and show that twistor spaces of general compact hypercomplex manifolds are balanced.
\vskip 0.25 cm
\noindent \textbf{Keywords.} Hypercomplex geometry; Twistor theory; Balanced manifold.
\end{abstract}

\maketitle

\tableofcontents

\section{Introduction}
The rapid progress in K\"ahler geometry in the middle of the XXth century opened two natural directions of research in complex geometry: on the one hand, a quest for a suitable quaternionic analogue of K\"ahler manifolds, and, on the other hand, the study of various generalizations of the K\"ahler condition $d\omega = 0$ on Hermitian metrics. The former question has been settled by Calabi in \cite{calabi} with the introduction of hyperk\"ahler manifolds. These are smooth manifolds $M$ with three integrable almost complex structures $I, J, K : TM \to TM$ satisfying the quaternionic relations $I^2 = J^2 = K^2 = -1$, $IJ = -JI = K$, together with a metric $g$ that preserves the three complex structures and such that the form $\Omega_I = \omega_J + \sqrt{-1}\omega_K$ is closed. Hyperk\"ahler manifolds appeared in the much earlier work of Berger on the classification of irreducible holonomy groups on Riemannian manifolds \cite{berger}, where they correspond to the holonomy group $Sp(n)$. Yau's proof of the Calabi conjecture \cite{yau} provides a wealth of examples of compact hyperk\"ahler manifolds by showing that these are equivalent to holomorphic symplectic manifolds. If we forget about the metric $g$ and look only at the complex structures on $M$, the resulting structure is called hypercomplex. Hypercomplex manifolds were first studied by Boyer in \cite{boyer}, where he gave their complete classification in quaternionic dimension 1. In addition to hyperk\"ahler metrics on hypercomplex manifolds, one can study their various generalizations such as HKT metrics which are characterized by the condition $\partial \Omega_I = 0$, where the decomposition $d = \partial + \bar{\partial}$ is with respect to the structure $I$; HKT metrics were first introduced in \cite{howe-pap}.

For the generalizations of the K\"ahler condition $d\omega = 0$, there is the famous classification result of Gray and Hervella. In their paper \cite{gray-hervella}, they used representation theory to define and study sixteen classes of almost Hermitian manifolds, each of which presents a generalization of the K\"ahler condition on metrics. A particularly useful class is that of balanced metrics, characterized by the condition $d^* \omega = 0$, where $d^*$ is the dual operator of $d$ with respect to the given metric. The terminology comes from the paper \cite{michel} of Michelsohn where these metrics were first studied in depth; in the terminology of Gray and Hervella, these are called special Hermitian metrics and correspond to the class $\mathcal{W}_3$. Balanced metrics form a strictly greater class than K\"ahler metrics and are dual to them in a sense made precise in \cite{michel}. An example illustrating the importance of balanced metrics comes from the paper \cite{verbit2} of Verbitsky, where he showed that balanced HKT metrics play the role of Calabi-Yau metrics for the quaternionic Calabi conjecture (yet to be proven). Another area where balanced metrics come to the fore is the theory of stability of vector bundles and the Kobayashi-Hitchin correspondence (see, e.g., \cite{lubke-teleman} for reference). The notion of stability was first introduced by Mumford in a purely algebro-geometric setting in \cite{mumford} for projective varieties and then generalized to K\"ahler manifolds and then to general Hermitian manifolds. Stable vector bundles are important because they form moduli spaces with meaningful structure. The Kobayashi-Hitchin correspondence relates these (essentially algebro-geometric) moduli spaces of stable vector bundles to the moduli spaces of Einstein-Hermitian vector bundles, a purely differential-geometric notion introduced by Kobayashi in \cite{kobayashi}. This correspondence was conjectured independently by Kobayashi and Hitchin, and then gradually proved in increasing generality: first for algebraic curves \cite{donaldson}, surfaces \cite{donaldson2} and manifolds \cite{donaldson3} by Donaldson, then for K\"ahler manifolds by Uhlenbeck and Yau \cite{uhlenbeck-yau,uhlenbeck-yau2}, and finally for general Hermitian manifolds by Li and Yau \cite{li-yau}. Although the result of Li and Yau establishes the correspondence for a general complex manifold $M$, the theory becomes more complicated compared to the K\"ahler case, since the notion of degree of a vector bundle, needed to define stability, is no longer a topological invariant of the vector bundle, as in the case of K\"ahler manifolds, but only a holomorphic one. However, as shown in \cite{li-yau}, if $M$ is balanced, the degree still turns out to be a topological invariant, thus making the theory in the case of balanced manifolds much simpler than in the general case.

This property of balanced manifolds is extensively used by Kaledin and Verbitsky in \cite{kaled-verbit}. Among other things, they use the twistor formalism to establish a correspondence between non-Hermitian Yang-Mills connections over a hyperk\"ahler manifold $M$ and holomorphic bundles over its twistor space $Z$, which essentially encodes the quaternionic structure of $M$ in purely holomorphic data. The twistor space $Z$ is never K\"ahler, but it is balanced, as they show in Section 4.4 of \cite{kaled-verbit}, essentially generalizing a result from the original paper \cite{michel} of Michelsohn stating that twistor spaces of self-dual Riemannian 4-manifolds are balanced. They then use the result of Li and Yau to study the moduli space of stable bundles over the twistor space $Z$. It is the goal of our exposition to present the argument of Kaledin and Verbitsky on the balancedness of the twistor space $Z$ of a hyperk\"ahler manifold $M$, and then extend it to the case of an arbitrary (compact) hypercomplex manifold $M$.

\section{Balanced manifolds} Our first goal is to give the definition of balanced metrics on manifolds. We start with some preliminaries from differential geometry.

Let $M$ denote a (real) $C^\infty$-manifold and $E \to M$ a (real) $C^\infty$-vector bundle over $M$. Recall that a \emph{connection} on $E$ is an $\mathbb{R}$-linear operator $\nabla : \Gamma(E) \to \Gamma(\Lambda^1 M \otimes E)$ satisfying the Leibniz rule:
\[
\nabla(fs) = df \otimes s + f \nabla s \ \ \forall f \in C^\infty M, s \in \Gamma(E).
\]
Given a vector field $X \in \Gamma(TM)$, we denote by $\nabla_X s \in \Gamma(E)$ the usual pairing of  $X$ with $\nabla s \in \Gamma\left(\Lambda^1 M \otimes E\right)$. Associated to a connection $\nabla$ is its \emph{curvature} $R^\nabla : \Lambda^2 (TM) \to \End(TM)$ defined by
\[
R^\nabla(X, Y) := \nabla_X \nabla_Y - \nabla_Y \nabla_X - \nabla_{\left[X, Y\right]} \ \ \forall X, Y \in \Gamma\left(TM\right).
\]
In the special case of $E = TM$ being the tangent bundle, we can also define the \emph{torsion} $T^\nabla : \Lambda^2 (TM) \to TM$ of the connection by
\[
T^\nabla(X, Y) := \nabla_X Y - \nabla_Y X - \left[X, Y\right] \ \ \forall X, Y \in \Gamma\left(TM\right).
\]
In fact, it's easy to verify that both $R^\nabla$ and $T^\nabla$ are $C^\infty$-linear operators, hence we can think of them as tensors: $R^\nabla \in \Gamma(\Lambda^2 M \otimes \End(TM))$, $T^\nabla \in \Gamma(\Lambda^2 M \otimes TM)$. If $R^\nabla = 0$, the connection is said to be \emph{flat}, while if $T^\nabla = 0$, it is called \emph{torsion-free}.

Observe that a connection $\nabla : \Gamma(E) \to \Gamma(\Lambda^1 M \otimes E)$ on $E$ induces a canonical connection on the dual bundle $E^* = \Hom_{\mathbb{R}}(E, \mathbb{R})$, also denoted by $\nabla$, and defined by
\[
\left< \nabla \eta, s \right> + \left< \eta, \nabla s \right> = d\left(\left< \eta, s \right>\right) \ \ \forall \eta \in \Gamma(E^*), \, s \in \Gamma(E),
\]
where we denote by $\left< \, ,\right>$ the pairing of $E^*$ with $E$. Given connections $\nabla^E, \nabla^F$ on vector bundles $E, F$, we can consider the induced connections $\nabla^{E \oplus F}$, $\nabla^{E \otimes F}$ on $E \oplus F$, $E \otimes F$ defined by
\[
\nabla^{E \oplus F}(s \oplus t) := \left(\nabla^E s\right) \oplus \left(\nabla^F t\right) \ \ \forall s \in \Gamma(E), \, t \in \Gamma(F).
\]
\[
\nabla^{E \otimes F}(s \otimes t) := \left(\nabla^E s\right) \otimes t + s \otimes \left(\nabla^F t\right) \ \ \forall s \in \Gamma(E), \, t \in \Gamma(F).
\]
Thus, starting with a single connection $\nabla$ on $E$, we can form induced connections on all tensor products $\left(E^*\right)^{\otimes r} \otimes E^{\otimes q}$. Moreover, it's not hard to see that the subspaces of symmetric and antisymmetric tensors are invariant under these connections. In what follows, all these induced connections on tensor powers of $E$ will be denoted by the same symbol $\nabla$, and when $\nabla s = 0$ for some tensor $s$, we will say that the connection \emph{preserves} $s$.

We now specialize to the case when $M$ is a complex manifold and $E \to M$ is a complex vector bundle. Since $E$ is in particular a real vector bundle, we can have connections on $E$ defined as above, but this time we can single out those that are $\mathbb{C}$-linear as operators $\Gamma(E) \to \Gamma(\Lambda^1 M \otimes E)$; these are precisely the connections which preserve the operator $I : E \to E$, $I^2 = -1$, of multiplication by the imaginary unit in $E$ viewed as a complex vector bundle. In addition to the induced connections described in the previous paragraph, a $\mathbb{C}$-linear connection $\nabla$ on $E$ induces $\mathbb{C}$-linear connections on the complex dual $E^* = \Hom_{\mathbb{C}}(E, \mathbb{C})$ and the conjugate bundle $\bar{E}$. For the special case that $E = TM$ is the tangent bundle, the operator $I : TM \to TM$ above is called the almost complex structure of $M$. It is a well-known result that the condition of $M$ being a complex manifold is equivalent to the \emph{integrability} of $I$, i.e. the existence of a torsion-free connection $\nabla$ that preserves $I$ \cite{newlander}.

There is a canonical eigenvalue decomposition of the operator $I$ on the complexified tangent bundle $T_{\mathbb{C}}M = TM \otimes_{\mathbb{R}} \mathbb{C} = T^{1,0}M \oplus T^{0,1}M$, where
\[
T^{1,0}M = \left\{ v \in T_{\mathbb{C}}M : Iv = \sqrt{-1}v\right\} = \left\{X - \sqrt{-1}IX : X \in TM\right\},
\]
\[
T^{0,1}M = \left\{ v \in T_{\mathbb{C}}M : Iv = -\sqrt{-1}v\right\} = \left\{X + \sqrt{-1}IX : X \in TM\right\}.
\]
Observe that $TM \cong T^{1,0}M$ as complex bundles, while $T^{0,1}M$ is the dual of $T^{1,0}M$. We can also define the induced operator $I : T^* M \to T^* M$ on the cotangent bundle by putting $I\Omega(X) := -\Omega(IX)$, and more generally on $\Lambda^n M$ by $I\left(\Omega_1 \wedge \ldots \wedge \Omega_n\right) = \left(I\Omega_1\right) \wedge \ldots \wedge \left(I\Omega_n\right)$. There is a similar decomposition $T^*_{\mathbb{C}}M = T^* M \otimes_{\mathbb{R}} \mathbb{C} = \left(T^*\right)^{1,0}M \oplus \left(T^*\right)^{0,1}M$, where
\[
\left(T^*\right)^{1,0}M = \left\{\omega \in T^*_{\mathbb{C}}M : \omega(v) = 0 \ \forall v \in T^{0,1}M\right\} = \left\{\Omega + \sqrt{-1}I\Omega : \Omega \in T^*M\right\},
\]
\[
\left(T^*\right)^{0,1}M = \left\{\omega \in T^*_{\mathbb{C}}M : \omega(v) = 0 \ \forall v \in T^{1,0}M\right\} = \left\{\Omega - \sqrt{-1}I\Omega : \Omega \in T^*M\right\}.
\]
The higher differential forms on $M$ can then be decomposed as
\[
\Lambda^k_{\mathbb{C}}M = \Lambda^k M \otimes_{\mathbb{R}} \mathbb{C} = \Lambda^{k,0} M \oplus \Lambda^{k-1,1} M \oplus \ldots \Lambda^{1,k-1} M \oplus \Lambda^{0,k} M,
\]
where
\[
\Lambda^{p,q} M \cong \Lambda^p \left(\left(T^*\right)^{1,0}M\right) \otimes \Lambda^q \left(\left(T^*\right)^{0,1}M\right).
\]
The (real) exterior derivative operator $d : \Lambda^k M \to \Lambda^{k+1} M$ can be extended by $\mathbb{C}$-linearity to $\Lambda^k_{\mathbb{C}} M$, and on the spaces $\Lambda^{p,q}M$ as above, it decomposes as $d  = \partial + \bar{\partial}$, where
\[
\partial : \Lambda^{p,q}M \longrightarrow \Lambda^{p+1,q}M, \ \bar{\partial} : \Lambda^{p,q}M \longrightarrow \Lambda^{p,q+1}M.
\]
We also introduce the differential operator $d^c = \sqrt{-1}\left(\bar{\partial} - \partial\right)$ for convenience. Observe that $d^c$ is a real operator like $d$, i.e. it takes real forms to real forms.

Define a \emph{Hermitian metric} on the complex manifold $M$ of $\dim_{\mathbb{C}}M = n$ to be a Riemannian metric $g$ on the tangent bundle $TM$ satisfying
\[
g(IX, IY) = g(X, Y) \ \ \forall X, Y \in \Gamma(TM),
\]
where $I$ is the almost complex structure. Hermitian metrics always exist, in fact, starting with an arbitrary Riemannian metric $g_0$ on $TM$, we can define
\[
g(X, Y) := g_0(X, Y) + g_0(IX, IY) \ \ \forall X, Y \in \Gamma(TM),
\]
and this is clearly Hermitian. Observe that, as a Riemannian metric, $g$ induces a (real) bundle isomorphism $TM \cong \Lambda^1 M$. We can also extend $g$ by $\mathbb{C}$-linearity to the complexified tangent bundle $T_{\mathbb{C}}M$, where it induces an isomorphism of complex vector bundles $T^{1,0} M \cong \Lambda^{0,1} M$. Given an arbitrary Hermitian manifold $(M, I, g)$, there are two canonical connections on its tangent bundle $TM$. 
\begin{enumerate}
\item[1.] The Levi-Civita connection $\nabla^{LC}$ is the unique $\mathbb{R}$-linear connection which preserves the metric tensor $g$ and whose torsion is zero.
\item[2.] The Chern connection $\nabla^{Ch}$ is the unique $\mathbb{C}$-linear connection which preserves $g$ and whose torsion tensor lies in $\Lambda^{2,0} M \otimes_{\mathbb{C}} T^{1,0}M \subseteq \left( \Lambda^2M \otimes_{\mathbb{R}} \mathbb{C}\right)  \otimes_{\mathbb{C}} T^{1,0}M \cong \Lambda^2 M \otimes_{\mathbb{R}} TM$, where $T^{1,0}M$ and $(TM, I)$ are identified as complex vector bundles.
\end{enumerate}

Associated to each Hermitian metric is its \emph{Hermitian form} $\omega \in \Lambda^2 M$ given by
\[
\omega(X, Y) := g(IX, Y) \ \ \forall X, Y \in \Gamma(TM).
\]
It's easy to verify that $\omega$ is a non-degenerate real $(1,1)$-form which satisfies the \emph{strict positivity} property:
\[
\omega(X, IX) > 0 \ \ \forall X \ne 0 \in \Gamma(TM).
\]
If the Hermitian form $\omega$ is closed, $M$ is called a \emph{K\"ahler manifold}; in case the weaker condition $d(\omega^{n-1}) = 0$ is satisfied, it is called a \emph{balanced manifold}. It can be shown that the K\"ahler condition is equivalent to $\nabla^{LC} I = 0$, and also to the vanishing of the torsion tensor $T^{Ch}$ of the Chern connection $\nabla^{Ch}$. On the other hand, observe that $T^{Ch}$ lies in
\[
\Lambda^{2,0} M \otimes_{\mathbb{C}} T^{1,0}M \subseteq \Lambda^{1,0}M \otimes_{\mathbb{C}} \left( \Lambda^{1,0}M \otimes_{\mathbb{C}} T^{1,0}M\right) \cong \Lambda^{1,0}M \otimes_{\mathbb{C}} \End_{\mathbb{C}}(T^{1,0}M),
\]
and it can be shown (see Theorem 1.6 in \cite{michel}) that the vanishing of the $(1,0)$-form obtained by taking the complex trace pairing on $\End_{\mathbb{C}}(T^{1,0}M)$ of the tensor $T^{Ch}$ is equivalent to the balancedness condition on the metric.

Observe that in dimension $\dim_{\mathbb{C}}M = 2$, the balancedness condition is equivalent to the K\"ahler condition, since in this special case $\omega^{n-1} = \omega$. In general dimension, however, the condition of being K\"ahler is stronger than that of being balanced. Examples of balanced non-K\"ahler manifolds are twistor spaces $Z$ of certain self-dual Riemannian 4-manifolds $M$. These are 3-dimensional complex manifolds which encode the conformal structure of $M$. They are always balanced (see \cite{michel}, Section 6), but, as shown by N. Hitchin in \cite{hitchin}, the twistor space $Z$ is K\"ahler only if $M = S^4$ or $\mathbb{CP}^2$. In the next two sections, we will extend the balancedness result to twistor spaces of hyperk\"ahler manifolds (following \cite{kaled-verbit}) and general compact hypercomplex manifolds.

We end this section with two lemmas of an essentially linear-algebraic nature that will be useful for us in Section \ref{sec:hypercomplex}. Recall that a real (1,1)-form $\eta$ on a complex manifold $(M, I)$ is strictly positive if it satisfies the condition $\eta(X, IX) > 0$ for all nonzero $X \in \Gamma(TM)$. Similarly, we say that a real $(n-1, n-1)$-form $\eta$ is strictly positive if for any nonzero $\alpha \in \Lambda^1 M$ we have that $\eta \wedge \alpha \wedge I\alpha$ is a strictly positive multiple of (any) volume form on $M$ compatible with the orientation determined by the complex structure. There is an intimate relationship between closed strictly positive $(n-1,n-1)$-forms on $M$ and balanced metrics.

\theoremstyle{plain}
\newtheorem{positive}{Lemma}
\begin{positive} \label{thm:positive}
Let $(M, I, g)$ be a Hermitian manifold of $\dim_{\mathbb{C}}M = n$. The existence of a closed strictly positive $(n-1, n-1)$-form on $M$ is equivalent to the balancedness of $M$, not necessarily with respect to the given metric.
\end{positive}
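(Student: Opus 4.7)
The plan is to prove the two directions separately, with the main content in the reverse direction being a pointwise linear-algebraic fact.

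For the forward direction, suppose $M$ is balanced with respect to some Hermitian metric $g'$ having Hermitian form $\omega'$. I would simply exhibit $\eta := \omega'^{n-1}/(n-1)!$ as the required form. Closedness is condition (4) of the Definition-Proposition applied to the operator $d$. Strict positivity follows because for any nonzero $\alpha \in \Lambda^1 M$, the real $(1,1)$-form $\sqrt{-1}\alpha \wedge I\alpha$ (up to sign) is strictly positive in the $(1,1)$-sense, and wedging a strictly positive $(1,1)$-form with $\omega'^{n-1}$ yields a positive multiple of $\omega'^n$, which in turn is a positive multiple of the Riemannian volume form determined by $g'$; this volume form is compatible with the orientation induced by $I$.

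For the reverse direction, given a closed strictly positive $(n-1,n-1)$-form $\eta$, I would invoke the purely linear-algebraic fact that at each point $p \in M$, the map $\omega \mapsto \omega^{n-1}/(n-1)!$ from the cone of strictly positive real $(1,1)$-forms on $T_pM$ to the cone of strictly positive real $(n-1,n-1)$-forms is a bijection (this is proved, e.g., in Section 1 of \cite{michel}). Applying this pointwise and smoothly, I obtain a unique strictly positive $(1,1)$-form $\omega'$ with $\omega'^{n-1}/(n-1)! = \eta$. Such a form is the Hermitian form of a Hermitian metric $g'$ defined by $g'(X, Y) := \omega'(X, IY)$. Since $d\eta = 0$, condition (4) of the Definition-Proposition applied in the other direction yields that $g'$ is balanced.

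The main obstacle is the bijectivity of the $(n-1)$st-power map on positivity cones. Surjectivity is the delicate part: given a strictly positive $(n-1,n-1)$-form $\eta_p$ at a point, one must extract a strictly positive $(1,1)$-form whose $(n-1)$st exterior power is $(n-1)!\eta_p$. The standard approach is to use the canonical duality between $\Lambda^{n-1,n-1}T^*_pM$ and $\Lambda^{1,1}T^*_pM$ provided by wedging against a chosen volume form: a strictly positive $(n-1,n-1)$-form corresponds to a positive-definite Hermitian form on $T^{1,0}_pM$, and one then diagonalizes and takes the appropriate root of the resulting Hermitian matrix. Injectivity is more straightforward, as different positive $(1,1)$-forms give different $(n-1)$st powers by a direct computation in an orthonormal frame. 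Once this linear algebra is in place, the global smoothness of $\omega'$ follows from the smooth dependence of the construction on $\eta$, and the rest of the argument is formal.
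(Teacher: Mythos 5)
Your proposal is correct and follows essentially the same route as the paper: both reduce to the pointwise linear-algebraic fact that $\omega \mapsto \omega^{n-1}$ is a bijection between the positivity cones in $\Lambda^{1,1}$ and $\Lambda^{n-1,n-1}$, established by using the volume-form duality to view $\eta$ as a positive $(1,1)$-form, diagonalizing in an orthonormal frame, and solving for the eigenvalues (the paper solves $a_i/a_j = b_j/b_i$ explicitly and gets global existence from uniqueness, exactly your injectivity-gives-smoothness point). You also spell out the easy forward direction, which the paper leaves implicit.
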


\begin{proof} (Cf. \cite{michel}, pp. 279-280)
Let $\eta \in \Lambda^{n-1,n-1}M$ be a closed strictly positive form. The Riemannian volume form $\Omega \in \Lambda^{2n}M$ induces an isomorphism of bundles $\Lambda^{n-1,n-1}M \cong \Lambda^{1,1}TM \cong T^{1,0}M \otimes T^{0,1}M$, whereas the metric $g$ gives an isomorphism $\Lambda^{1,1}TM \cong \Lambda^{1,1}M$. Under these identifications, $\eta$ can be thought of as a strictly positive $(1,1)$-form on $M$. By basic linear algebra, there exists a local orthonormal frame $\left\{e_1, Ie_1, \ldots, e_n, Ie_n\right\}$ of $TM$, such that $\eta \in \Lambda^{1,1}M$ can be expressed as
\[
\eta = \sum_{i=1}^n a_i \, e_i \wedge Ie_i,
\]
where we think of $e_i$ as elements of $ \Lambda^1 M \cong TM$ and all $a_i > 0$. Since $\Omega = e_1 \wedge Ie_1 \wedge \ldots \wedge e_n \wedge Ie_n$, we have that, as element of $\Lambda^{n-1,n-1}M$, $\eta$ can be expressed in terms of this basis as
\[
\eta = \sum_{i=1}^n a_i \, e_1 \wedge Ie_1 \wedge \ldots \wedge \widehat{e_i \wedge Ie_i} \wedge \ldots \wedge e_n \wedge Ie_n.
\]
We are now looking for a strictly positive form $\omega \in \Lambda^{1,1}M$ such that $\omega^{n-1} = \eta$. If we can establish the existence of such a form, our proof will be finished, since the condition $d\left(\omega^{n-1}\right) = 0$ will imply that the Hermitian metric on $M$ induced by $\omega$ is balanced. If we write
\[
\omega = \sum_{i=1}^n b_i \, e_i \wedge Ie_i,
\]
we then have
\[
\omega^{n-1} = \sum_{i=1}^n (n-1)! \, b_1 \ldots \widehat{b_i} \ldots b_n \, e_1 \wedge Ie_1 \wedge \ldots \wedge \widehat{e_i \wedge Ie_i} \wedge \ldots \wedge e_n \wedge Ie_n.
\]
If $\omega^{n-1} = \eta$, observe that
\[
\frac{a_i}{a_j} = \frac{(n-1)! \, b_1 \ldots \widehat{b_i} \ldots b_n}{(n-1)! \, b_1 \ldots \widehat{b_j} \ldots b_n} = \frac{b_j}{b_i}.
\]
Writing
\[
a_1 = (n-1)! \, b_2 \ldots b_n = (n-1)! \, \frac{b_2}{b_1} \ldots \frac{b_n}{b_1} \cdot b_1^{n-1} =  (n-1)! \, \frac{a_1}{a_2} \ldots \frac{a_1}{a_n} b_1^{n-1},
\]
we can solve for $b_1$ uniquely, since we know that $b_1 > 0$ and all the $a_i > 0$. Knowing $b_1$ clearly gives us all the other $b_i$. This shows that $\omega$ exists locally, while its global existence is a consequence of its uniqueness.
\end{proof}

\theoremstyle{plain}
\newtheorem{cauchy-schw}[positive]{Lemma}
\begin{cauchy-schw} \label{thm:cauchy-schw}
Let $(M, I)$ be a compact complex manifold and suppose that its tangent space $TM$ decomposes into a direct sum $TM = E \oplus F$ of complex subbundles $E$ and $F$. If $\omega, \omega'$ are real (1,1)-forms on $M$ such that $\omega$ is strictly positive when restricted to $E$, while $\omega'$ is strictly positive on $F$ and $E \subseteq \ker \omega'$, there exists a number $T > 0$ such that $\omega + T\omega'$ is strictly positive on $M$.
\end{cauchy-schw}

\begin{proof}
The problem is local in nature by compactness of $M$, since if $\left\{U_i\right\}$ is a cover of $M$ such that $\omega + T_i \omega'$ is strictly positive on $U_i$, then taking a finite subcover and letting $T$ be the maximum of the corresponding $T_i$'s, we get a strictly positive form $\omega + T \omega'$ on the whole $M$.

Let $\omega = \omega_1 + \omega_2 + \omega_3$ be the decomposition of $\omega$ according to the direct sum
\[
\Lambda^2 \left(E^* \oplus F^*\right) = \Lambda^2 (E^*) \oplus \left(E^* \otimes F^*\right) \oplus \Lambda^2 (F^*),
\]
and observe that $\omega'$ lies entirely in the third summand. By assumption of strict positivity, $\omega_1$ is a Hermitian form on $E$, hence comes from a Hermitian metric. Choosing a local orthonormal frame $\left\{e_1, Ie_1, \ldots, e_k, Ie_k\right\}$ for this metric, we can express $\omega_1$ as
\[
\omega_1 = \sum_{i=1}^k e_i \wedge Ie_i,
\]
where we regard the $e_i$ as elements of $E^* \cong E$. Similarly, $\omega'$ is a Hermitian form on $F$ induced by some Hermitian metric. By simple linear algebra, there exists a local orthonormal frame $\left\{f_1, If_1, \ldots, f_l, If_l\right\}$ of $F$ in which the two forms decompose as
\[
\omega_3 = \sum_{j=1}^l a_j \, f_j \wedge If_j, \ \omega' = \sum_{j=1}^l f_j \wedge If_j,
\]
where again we regard $f_j$ as sections of $F^* \cong F$. Clearly, we can choose $T > 0$ such that on some neighborhood, $\omega_3 + T\omega'$ is strictly positive on $F$. This makes $\omega + T \omega'$ locally strictly positive when restricted to both $E$ and $F$, so we only need to take care of the $\omega_2$ term. For this, it is enough to show that we can choose $T$ such that $\omega_1 + \omega_2 + T\omega'$ is locally strictly positive. Let
\[
X = \sum_{i=1}^{2k} \left(X_{2i-1} e_i + X_{2i} Je_i\right), \ Y = \sum_{i=1}^{2l} \left(X_{2i-1} f_i + X_{2i} Jf_i\right)
\]
be arbitrary nonvanishing sections of $E$, $F$ written in the above bases and let $t >0$. We want to show that plugging in $(X + tY, J(X+tY))$ into the above form always gives a strictly positive number:
\[
\omega_1(X, JX) + \omega_2(X, tJY) + \omega_2(tY, JX) + T\omega'(tY, tJY) > 0,
\]
\[
\omega_1(X, JX) + 2t \, \omega_2(X, JY) + t^2 \, T \omega'(Y, JY) > 0.
\]
Thinking of this as a quadratic equation in $t$, its strict positivity is equivalent to the discriminant being negative:
\[
4 \, \omega_2(X, JY)^2 - 4 T \, \omega_1(X, JX) \omega'(Y, JY) < 0,
\]
\[
\omega_2(X, JY)^2 < T \, \omega_1(X, JX) \omega'(Y, JY).
\]
Writing out the right hand side in the bases $\left\{e_i\right\}$, $\left\{f_j\right\}$, we get
\[
T\left(\sum_{i=1}^{2k} X_i^2\right)\left(\sum_{i=1}^{2l} Y_i^2\right),
\]
whereas
\[
\omega_2(X, JY) = \sum_{i,j} c_{ij} X_i Y_j,
\]
for some coefficients $c_{ij}$. Applying the Cauchy-Schwarz inequality to $\omega_2(X, JY)^2$, we get
\[
\left(\sum_{i,j} c_{ij} X_i Y_j\right)^2 \le \sum_{i,j} \left(c_{ij}\right)^2 \sum_{i,j} X_i^2 Y_j^2 = \sum_{i,j} \left(c_{ij}\right)^2 \left(\sum_{i=1}^{2k} X_i^2\right) \left(\sum_{j=1}^{2l} Y_j^2\right).
\]
The sum $\sum_{i,j} \left(c_{ij}\right)^2$ is clearly locally bounded by some $T > 0$, which gives the required inequality.
\end{proof}

\section{Twistor spaces of hyperk\"ahler manifolds} In this section we introduce hypercomplex and hyperk\"ahler manifolds and their twistor spaces. We then present the argument given by Kaledin and Verbitsky in Section 4.4 of \cite{kaled-verbit} that shows that twistor spaces of hyperk\"ahler manifolds are balanced.

A $\emph{hypercomplex}$ manifold $M$ is a $C^\infty$ manifold equipped with three almost complex structures $I, J, K : TM \to TM$ that are integrable and satisfy the quaternionic relations $I^2 = J^2 = K^2 = -1$, $IJ = -JI = K$. Thus, there is an action of the quaternion algebra $\mathbb{H}$ on the tangent bundle $TM$, making each tangent space $T_mM$ into a quaternionic vector space. Observe that each element in
\[
S^2 = \left\{ x_1I + x_2J + x_3K : x_1^2 + x_2^2 + x_3^2  = 1\right\} = \left\{q \in \mathbb{H} : q^2 = -1 \right\} \subseteq \Imag \mathbb{H},
\]
defines an almost complex structure on $TM$, which is integrable since $I$, $J$ and $K$ are. Thus, we have a sphere $S^2$ of complex structures associated to each hypercomplex manifold $M$. A \emph{hyperhermitian metric} on $M$ is a Riemannian metric on $TM$ which is Hermitian with respect to the complex structures $I$, $J$, $K$, and hence also with respect to the whole sphere of complex structures on $M$. Just like the usual Hermitian metrics, these always exist, since starting with an arbitrary Riemannian metric $g_0$, we can define $\forall X, Y \in \Gamma(TM)$,
\[
g(X, Y) := g_0(X, Y) + g_0(IX, IY) + g_0(JX, JY) + g_0(KX, KY),
\]
which is clearly hyperhermitian. For any complex structure $A \in S^2$ on $M$, let
\[
\omega_A(X, Y) := g(AX, Y) \ \ \forall X, Y \in \Gamma(TM)
\]
denote the corresponding Hermitian form. If all the $\omega_A$ are closed, the manifold $M$ is called \emph{hyperk\"ahler}. This is clearly equivalent to $\omega_I$, $\omega_J$ and $\omega_K$ being closed, and in fact it's enough to check only two of them, since we have $\forall A \in S^2$,
\[
d\omega_A = 0 \iff \nabla^{LC} A = 0,
\]
so if $\nabla^{LC} I = \nabla^{LC} J = 0$, for example, then $\nabla^{LC} K = \nabla^{LC} IJ = 0$. In view of this, an equivalent formulation of the hyperk\"ahler condition is that the form
\[
\Omega_I = \omega_J + \sqrt{-1}\omega_K
\]
is closed. This turns out to be a form of type $(2,0)$ with respect to the structure $I$.

For $M$ hypercomplex, we define its \emph{twistor} space $Z$ to be the product manifold $Z = M \times S^2$, where $S^2$ parametrizes the complex structures on $M$, as above. The space $Z$ itself has a natural complex structure, as follows. The sphere $S^2 = \left\{ x_1^2 + x_2^2 + x_3^2  = 1 \right\}$ is identified with the complex projective line $\mathbb{CP}^1$ via stereographic projections
\[
\begin{array}{ccc}
P_N : S^2 \setminus \left\{(0, 0, 1)\right\} & \longleftrightarrow & \mathbb{C} \\
(x_1, x_2, x_3) & \longmapsto & \frac{x_1 - \sqrt{-1}x_2}{1-x_3} \\
\left(\frac{z+\bar{z}}{1+|z|^2}, \frac{\sqrt{-1}(z-\bar{z})}{1 + |z|^2}, \frac{-1+|z|^2}{1+|z|^2}\right) & \longmapsfrom & z
\end{array} \begin{array}{ccc}
P_S : S^2 \setminus \left\{(0, 0, -1)\right\} & \longleftrightarrow & \mathbb{C} \\
(x_1, x_2, x_3) & \longmapsto & \frac{x_1 + \sqrt{-1}x_2}{1+x_3} \\
\left(\frac{w+\bar{w}}{1+|w|^2}, \frac{\sqrt{-1}(\bar{w}-w)}{1 + |w|^2}, \frac{1-|w|^2}{1+|w|^2}\right) & \longmapsfrom & w
\end{array}
\]
Let $I_{\mathbb{CP}^1} : T\mathbb{CP}^1 \to T\mathbb{CP}^1$ denote the almost complex structure on $\mathbb{CP}^1$. Given any point $(m, A) \in M \times \mathbb{CP}^1 = Z$, note that $T_{(m, A)}Z = T_m M \oplus T_A \mathbb{CP}^1$. We define $\mathcal{I} : T_{(m, A)}Z \to T_{(m, A)}Z$ as follows:
\[
\begin{array}{ccccc}
\mathcal{I} & : & T_m M \oplus T_A \mathbb{CP}^1 & \longrightarrow & T_m M \oplus T_A \mathbb{CP}^1. \\
&& (X, V) & \longmapsto & \left(AX, I_{\mathbb{CP}^1}V\right)
\end{array}
\]
It's clear that this defines an almost complex structure on $Z$, and in fact it can be shown that it is integrable, making $Z$ into a complex manifold \cite{kaled} of complex dimension $n+1$, where $\dim_{\mathbb{C}}M = n$. There are canonical projections
\[
\xymatrix{& Z \ar[rd]^\pi \ar[dl]_\sigma & \\ M & & \mathbb{CP}^1,}
\]
the second of which is a holomorphic map, and also, $\forall m \in M$, the canonical section $\mathbb{CP}^1 \to \left\{m\right\} \times \mathbb{CP}^1$ is holomorphic; it is called the \emph{horizontal twistor line} corresponding to $m \in M$. Continuing the analogy, in the direct sum decomposition $T_{(m, A)}Z = T_m M \oplus T_A \mathbb{CP}^1$, vectors in $T_m M$ are called \emph{vertical}, while vectors in  $T_A \mathbb{CP}^1$ are \emph{horizontal}, and similarly for 1-forms. The canonical antiholomorphic involution $\iota : \mathbb{CP}^1 \to \mathbb{CP}^1$, which is just the antipodal map on $S^2$, induces an antiholomorphic map
\[
\iota' = \id \times \iota : M \times \mathbb{CP}^1 \longrightarrow M \times \mathbb{CP}^1
\]
on $Z = M \times \mathbb{CP}^1$, which satisfies $\iota \circ \pi = \pi \circ \iota'$. It can be shown \cite{hklr,verbit} that the hypercomplex manifold $M$ can be recovered from the horizontal twistor lines in $Z$ which are completely described by three pieces of information, namely the complex structure on $Z$, the holomorphic projection $\pi : Z \to \mathbb{CP}^1$, and the antiholomorphic involution $\iota'$ that satisfies $\iota \circ \pi = \pi \circ \iota'$.

In case $M$ is hyperk\"ahler, there is one extra structure on the twistor space $Z$, which comes from the K\"ahler metric on $M$. Let $g_M$ denote this metric, and $g_{\mathbb{CP}^1}$ the usual Fubini-Study metric on $\mathbb{CP}^1$. Then
\[
g := \sigma^*\left(g_M\right) + \pi^*\left(g_{\mathbb{CP}^1}\right)
\]
is easily verified to be a Hermitian metric on $Z$. To simplify notation, we write $g = g_M + g_{\mathbb{CP}^1}$. At a point $(m, A) \in Z$, the corresponding Hermitian form $\omega$ decomposes as follows:
\[
\omega\left((X,V), (X',V')\right) = \omega_M(X,X') + \omega_{\mathbb{CP}^1}(V,V') = g_M(AX, X') + g_{\mathbb{CP}^1}(I_{\mathbb{CP}^1}V, V'),
\]
where $(X, V), (X', V') \in T_m M \oplus T_A \mathbb{CP}^1 = T_{(m, A)} Z$. By analogy with the case of twistor spaces of self-dual 4-manifolds, $\omega$ need not be closed; in fact, if $M$ is compact, $Z$ can never be K\"ahler, as we will show in the next section. However, the Hermitian metric on $Z$ is always balanced when $M$ is hyperk\"ahler, as we show next.

\theoremstyle{plain}
\newtheorem{hyperk-balanced}{Theorem}
\begin{hyperk-balanced} \label{thm:hyperk-balanced} (Kaledin-Verbitsky)
Let $(M, I, J, K, g_M)$ be a hyperk\"ahler manifold of complex dimension $n$. Then its twistor space $Z$ with the Hermitian metric induced from the hyperk\"ahler structure is balanced.
\end{hyperk-balanced}

\begin{proof} We closely follow the argument laid out in Section 4.4 of \cite{kaled-verbit}.
In the notation used above, we need to show that $d\left(\omega^n\right) = 0$. This is clearly equivalent to showing
\[
\omega^{n-1} \wedge d\omega = 0.
\]
Observe that we have a decomposition of the differential operator $d = d_M + d_{\mathbb{CP}^1}$ according to the direct sum $TZ = TM \oplus T\mathbb{CP}^1$. Since $\omega = \omega_M + \omega_{\mathbb{CP}^1}$, we have
\[
d\omega = d_M \omega_M + d_{\mathbb{CP}^1} \omega_M + d_M \omega_{\mathbb{CP}^1} + d_{\mathbb{CP}^1} \omega_{\mathbb{CP}^1}.
\]
The first term is zero by the hyperk\"ahler condition on $M$, while the last two terms are zero because $\omega_{\mathbb{CP}^1}$ is a pullback of a closed form on $\mathbb{CP}^1$ to $Z$. We need to investigate the second term. To simplify our argument, we will work over a fixed horizontal twistor line $\left\{m\right\} \times \mathbb{CP}^1 \subseteq Z$.

Let
\[
W := \Imag \mathbb{H} = \left\{ aI + bJ + cK \right\} \cong \mathbb{R}^3,
\]
and let $\mathcal{W} = \mathbb{CP}^1 \times W$ be the corresponding trivial bundle. When we view $\mathbb{CP}^1$ as a parametrization of the complex structures on $M$, it's just the unit sphere $S^2 \subseteq W$, hence we can view $\mathcal{W}$ as the restriction $\mathcal{W} = \left. TW\right|_{S^2}$. There is a canonical embedding of $\mathcal{W}$ into the (trivial) bundle of vertical 2-forms over the horizontal line $\left\{m\right\} \times \mathbb{CP}^1$:
\[
\begin{array}{rcl}
\mathcal{W} = \mathbb{CP}^1 \times W & \longrightarrow & \left\{m\right\} \times \mathbb{CP}^1 \times \Lambda^2_m M \\
\left(A, aI + bJ + cK\right) & \longmapsto & \left(m, A, a\omega_I + b\omega_J + c\omega_K\right).
\end{array}
\]
Given an element $V = aI + bJ +cK$ of $W$, we denote by $\omega_V = a\omega_I + b\omega_J + c\omega_K$ its image under this mapping. In this way, we can think of $\mathcal{W}$ as a bundle of vertical 2-forms over $\left\{m\right\} \times \mathbb{CP}^1$ with global frame $\left\{\omega_I, \omega_J , \omega_K\right\}$. Since $d_{\mathbb{CP}^1} \omega_I = d_{\mathbb{CP}^1} \omega_J = d_{\mathbb{CP}^1} \omega_K = 0$, we can think of the operator $d_{\mathbb{CP}^1}$ on $\mathcal{W}$ as a flat connection
\[
\begin{array}{rcl}
d_{\mathbb{CP}^1} = \nabla : \Gamma\left(\mathcal{W}\right) & \longrightarrow & \Gamma\left(\Lambda^1 \mathbb{CP}^1 \otimes \mathcal{W}\right) \\
f_1 \omega_I + f_2 \omega_J + f_3 \omega_K & \longmapsto & df_1 \otimes \omega_I + df_2 \otimes \omega_J + df_3 \otimes \omega_K.
\end{array}
\]
Of course, this is just the usual Euclidean connection on $\mathbb{R}^3 \cong \Imag \mathbb{H}$ restricted to $S^2 \cong \mathbb{CP}^1$. Note that $\mathcal{W} = \left.TW\right|_{S^2} \cong \left.T\mathbb{R}^3\right|_{S^2} = N \oplus TS^2$, where $N$ is the normal bundle of the embedding $S^2 \subseteq \mathbb{R}^3$ and $TS^2$ is the tangent bundle. At the point $A = (a_1, a_2, a_3) \in S^2 \cong \mathbb{CP}^1$, we have
\[
N_A = \left\{ \lambda \, a_1 \omega_I + \lambda \, a_2\omega_J + \lambda \, a_3\omega_K : \lambda \in \mathbb{R} \right\},
\]
\[
T_AS^2 = \left\{v_1 \, \omega_I + v_2 \, \omega_J + v_3 \, \omega_K : a_1 v_1 + a_2 v_2 + a_3 v_3 = 0 \right\}.
\]
Thus, $N$ is a trivial bundle with a global trivialization given by $\omega_M = x_1\omega_I + x_2\omega_J + x_3\omega_K$, while the almost complex structure $I_{\mathbb{CP}^1} : T\mathbb{CP}^1 \to T\mathbb{CP}^1$ at the point $A \in \mathbb{CP}^1$ is given by the quaternion multiplication $V \mapsto AV$, where we once again think of $A \in N_A$, $V \in T_AS^2$ as elements of $W$. We want to compute
\[
d_{\mathbb{CP}^1} \omega_M = \nabla \left(x_1 \omega_I + x_2 \omega_J + x_3 \omega_K\right) = dx_1 \otimes \omega_I + dx_2 \otimes \omega_J + dx_3 \otimes \omega_K.
\]
Fix a point $A = (a_1, a_2, a_3)$ in $\left\{m\right\} \times \mathbb{CP}^1 $ and look at the decomposition $d_{\mathbb{CP}^1}\omega_M = \partial_{\mathbb{CP}^1} \omega_M + \bar{\partial}_{\mathbb{CP}^1} \omega_M$. We claim that $\bar{\partial}_{\mathbb{CP}^1} \omega_M \in \Gamma\left(\Lambda^{0,1}\mathbb{CP}^1 \otimes \Lambda_m^{2,0} M\right)$, where the complex structure on $T_mM$ is understood to be $A$. To verify this, we use the description of $d_{\mathbb{CP}^1}$ as the connection $\nabla$ and plug in an arbitrary vector $V + \sqrt{-1} I_{\mathbb{CP}^1}V = V + \sqrt{-1} AV \in T_A^{0,1} \mathbb{CP}^1$, where $V = (v_1, v_2, v_3) \in T_A \mathbb{CP}^1$ is real.
\[
\nabla_{V + \sqrt{-1} AV} \omega_M = v_1 \omega_I + v_2 \omega_J + v_3 \omega_K +
\]
\[
+ \sqrt{-1}(a_2v_3 - a_3 v_2)\omega_I + \sqrt{-1}(a_3v_1 - a_1v_3)\omega_J + \sqrt{-1}(a_1v_2 - a_2v_1)\omega_K =
\]
\[
= \omega_V + \sqrt{-1} \omega_{AV}.
\]
Plugging into this form an arbitrary vector $X \in T_mM$ and a (0,1)-vector $Y \in T_m^{0,1}M$ (with respect to the complex structure $A$), we get
\[
\omega_V(X, Y) + \sqrt{-1}\omega_{AV}(X, Y) = g(VX, Y) + \sqrt{-1}g(AVX, Y) =
\]
\[
= g(VX, Y) + \sqrt{-1}g(A(AV)X, AY) = g(VX, Y) + \sqrt{-1}g(-VX, -\sqrt{-1}Y) = 0.
\]
Hence $\bar{\partial}_{\mathbb{CP}^1} \omega_M \in \Gamma\left(\Lambda^{0,1}\mathbb{CP}^1 \otimes \Lambda_m^{2,0} M\right)$ and $\partial_{\mathbb{CP}^1} \omega_M \in \Gamma\left(\Lambda^{1,0}\mathbb{CP}^1 \otimes \Lambda_m^{0,2} M\right)$, since $\omega_M$ is real and $\bar{\partial}_{\mathbb{CP}^1} \omega_M$ is the conjugate of $\partial_{\mathbb{CP}^1} \omega_M$.

We now examine the form $\omega^{n-1} \wedge d\omega$. We have
\[
\omega^{n-1} \wedge d\omega = \left(\omega_M + \omega_{\mathbb{CP}^1}\right)^{n-1} \wedge d_{\mathbb{CP}^1} \omega_M  = \omega_M^{n-1} \wedge \partial_{\mathbb{CP}^1} \omega_M + \omega_M^{n-1} \wedge \bar{\partial}_{\mathbb{CP}^1} \omega_M +
\]
\[
+ (n-1) \omega_M^{n-2}\wedge \omega_{\mathbb{CP}^1} \wedge \partial_{\mathbb{CP}^1} \omega_M + (n-1) \omega_M^{n-2}\wedge \omega_{\mathbb{CP}^1} \wedge \bar{\partial}_{\mathbb{CP}^1} \omega_M.
\]
Since $\omega_M^{n-1} \in \Lambda_m^{n-1,n-1}M$, the vertical bidegree of the first two terms is $(n-1, n+1)$, $(n+1, n-1)$, respectively, making them zero, since $\dim_{\mathbb{C}}M = n$. On the other hand, the degree of the horizontal part of the last two terms is $3 > 2 = \dim_{\mathbb{R}} \mathbb{CP}^1$, making them zero as well.

\end{proof}

\section{Twistor spaces of hypercomplex manifolds} \label{sec:hypercomplex}
We now come to the main part of our exposition: a generalization of Theorem \ref{thm:hyperk-balanced} for compact hyperhermitian (and hence general hypercomplex) manifolds $M$. In contrast to the hyperk\"ahler case, the product metric on $Z = M \times \mathbb{CP}^1$ need not be balanced. Instead of looking at the form
\[
\omega^n =  \left(\omega_M + \omega_{\mathbb{CP}^1}\right)^n = \omega_M^n + n \omega_M^{n-1} \wedge \omega_{\mathbb{CP}^1},
\]
we will instead use Lemma \ref{thm:cauchy-schw} to show that a certain linear combination of forms
\[
\alpha \, \omega_M^n + \beta \, dd^c \left(\omega_M^{n-1}\right)
\]
is a closed strictly positive $(n,n)$-form on $Z = M \times \mathbb{CP}^1$, and then use Lemma \ref{thm:positive} to conclude that $Z$ is balanced. We will need compactness of $M$ in order to apply Lemma \ref{thm:cauchy-schw}.

\theoremstyle{plain}
\newtheorem{hyperc-balanced}[hyperk-balanced]{Theorem}
\begin{hyperc-balanced}
Let $(M, I, J, K, g_M)$ be a compact hyperhermitian manifold of complex dimension $n$. Then its twistor space $Z$ is balanced.
\end{hyperc-balanced}

\begin{proof}
The volume form on $Z = M \times \mathbb{CP}^1$ with respect to the product metric is given by
\[
\Omega_Z = \frac{\omega^{n+1}}{(n+1)!} = \frac{\left(\omega_M + \omega_{\mathbb{CP}^1}\right)^{n+1}}{(n+1)!} = \frac{(n+1)\omega_M^n \wedge \omega_{\mathbb{CP}^1}}{(n+1)!} = \Omega_M \wedge \omega_{\mathbb{CP}^1},
\]
where $\Omega_M$ denotes the pullback of the volume form on $M$ via the projection $\sigma : Z \to M$. Note that $\omega_M^n = n! \, \Omega_M$ and $dd^c\left(\omega_M^{n-1}\right)$ are closed $(n,n)$-forms on $Z$ and we can think of them as elements of $\Lambda^{1,1} TZ$ via the isomorphism induced by the volume form $\Omega_Z$. Because the metric on $Z$ induces an isomorphism $TZ \cong \Lambda^1 Z$, we will be able to apply Lemma \ref{thm:cauchy-schw} if we can show that $\omega_M^n$ is strictly positive and vertical forms lie in its kernel, while $\pm dd^c\left(\omega_M^{n-1}\right)$ (the sign will depend on the dimension of $M$) is strictly positive when restricted to vertical forms in $\Lambda^1 M$. The first statement is easy, since $\omega_M^n$ is a constant multiple of the vertical volume form $\Omega_M$, and we know that $\Omega_Z = \Omega_M \wedge \omega_{\mathbb{CP}^1}$. For the second statement, since we only need to establish strict positivity on vertical forms, it's enough to restrict to a horizontal twistor line $\left\{m\right\} \times \mathbb{CP}^1$ and consider the form
\[
d_{\mathbb{CP}^1}d_{\mathbb{CP}^1}^c\left(\omega_M^{n-1}\right) = 2\sqrt{-1}(n-1)\left(\partial_{\mathbb{CP}^1}\bar{\partial}_{\mathbb{CP}^1} \omega_M\right) \wedge \omega_M^{n-2} +
\]
\[
+ 2\sqrt{-1}(n-1)(n-2)\left(\partial_{\mathbb{CP}^1} \omega_M\right) \wedge \left(\bar{\partial}_{\mathbb{CP}^1} \omega_M\right) \wedge \omega_M^{n-3}.
\]
Note that if $n = 2$, the second term vanishes. We will now use our description of the $d_{\mathbb{CP}^1}$ operator as a connection from the proof of Theorem \ref{thm:hyperk-balanced} to show that both of these terms are multiples of $\omega_{\mathbb{CP}^1} \wedge \omega_M^{n-1}$, which is strictly positive since products of positive forms are positive (see \cite{demailly}, Section III.1). We will compute the forms
\[
\partial_{\mathbb{CP}^1}\bar{\partial}_{\mathbb{CP}^1} \omega_M = \nabla^{1,0} \nabla^{0,1} \omega_M = \partial \bar{\partial} x_1 \otimes \omega_I + \partial \bar{\partial} x_2 \otimes \omega_J + \partial \bar{\partial} x_3 \otimes \omega_K,
\]
\[
\partial_{\mathbb{CP}^1} \omega_M = \nabla^{1,0} \omega_M = \partial x_1 \otimes \omega_I + \partial x_2 \otimes \omega_J + \partial x_3 \otimes \omega_K,
\]
\[
\bar{\partial}_{\mathbb{CP}^1} \omega_M = \nabla^{0,1} \omega_M = \bar{\partial} x_1 \otimes \omega_I + \bar{\partial} x_2 \otimes \omega_J + \bar{\partial} x_3 \otimes \omega_K
\]
in the local holomorphic coordinate $z = x + \sqrt{-1}y$ on $S^2 \cong \mathbb{CP}^1$ coming from the stereographic projection $P_N$ from the point $(1,0,0)$ (see previous section). The computation in the other chart is completely analogous, and we will omit it. The Fubini-Study metric $\omega_{\mathbb{CP}^1}$ takes the form
\[
\omega_{\mathbb{CP}^1} = \sqrt{-1}\partial\bar{\partial} \log \left(1+ |z|^2\right) = \sqrt{-1}\partial \left(\frac{z \, d\bar{z}}{1+|z|^2}\right) = \frac{\sqrt{-1} dz \wedge d\bar{z}}{\left(1+|z|^2\right)^2}.
\]
Calculating the various partial derivatives of $x_1$, $x_2$, $x_3$, we get
\[
\begin{array}{cc}
\partial x_1 = \partial\left(\frac{z+\bar{z}}{1+|z|^2}\right) = \frac{1-\bar{z}^2}{\left(1+|z|^2\right)^2} \, dz & \bar{\partial} x_1 = \bar{\partial}\left(\frac{z+\bar{z}}{1+|z|^2}\right) = \frac{1-z^2}{\left(1+|z|^2\right)^2} \, d\bar{z} \\
\partial x_2 = \partial\left(\frac{\sqrt{-1}(z-\bar{z})}{1 + |z|^2}\right) = \frac{\sqrt{-1}\left(1 +\bar{z}^2\right)}{\left(1+|z|^2\right)^2} \, dz & \bar{\partial} x_2 = \bar{\partial}\left(\frac{\sqrt{-1}(z-\bar{z})}{1 + |z|^2}\right) = \frac{-\sqrt{-1}\left(1 +z^2\right)}{\left(1+|z|^2\right)^2} \, d\bar{z} \\
\partial x_3 = \partial\left(\frac{-1+|z|^2}{1+|z|^2}\right) = \frac{2\bar{z}}{\left(1+|z|^2\right)^2} \, dz & \bar{\partial} x_3 = \bar{\partial}\left(\frac{-1+|z|^2}{1+|z|^2}\right) = \frac{2z}{\left(1+|z|^2\right)^2} \, d\bar{z} \\
\end{array}
\]
\[
\partial\bar{\partial}x_1 = \frac{-2(z+\bar{z}) \, dz \wedge d\bar{z}}{\left(1+|z|^2\right)^3}, \ \partial\bar{\partial}x_2 = \frac{-2\sqrt{-1}(z-\bar{z}) \, dz \wedge d\bar{z}}{\left(1+|z|^2\right)^3}, \ \partial\bar{\partial}x_3 = \frac{-2\left(-1+|z|^2\right) \, dz \wedge d\bar{z}}{\left(1+|z|^2\right)^3}.
\]
Thus,
\[
\sqrt{-1} \partial_{\mathbb{CP}^1}\bar{\partial}_{\mathbb{CP}^1} \omega_M = -2\left(\frac{\sqrt{-1} dz \wedge d\bar{z}}{\left(1+|z|^2\right)^2}\right) \otimes \left(\frac{z+\bar{z}}{1+|z|^2} \, \omega_I + \frac{\sqrt{-1}(z-\bar{z})}{1 + |z|^2} \, \omega_J + \frac{-1+|z|^2}{1+|z|^2} \, \omega_K\right) =
\]
\[
= -2 \, \omega_{\mathbb{CP}^1} \wedge \omega_M,
\]
from which we conclude that
\[
2\sqrt{-1}(n-1)\left(\partial_{\mathbb{CP}^1}\bar{\partial}_{\mathbb{CP}^1} \omega_M\right) \wedge \omega_M^{n-2} = - 4 (n-1) \omega_{\mathbb{CP}^1} \wedge \omega_M^{n-1}.
\]
If $n=2$, then, as we noted above, this is equal to $d_{\mathbb{CP}^1}d^c_{\mathbb{CP}^1} \left(\omega_M^{n-1}\right)$, so taking the negative of $dd^c\left(\omega_M^{n-1}\right)$ gives a form that is strictly positive on vertical 1-forms, and we can apply Lemma \ref{thm:cauchy-schw} to conclude that $\exists \, T > 0$ such that
\[
T \, \omega_M^n - dd^c\left(\omega_M^{n-1}\right)
\]
is strictly positive. For the case $n > 2$, we also need to examine the other term. We know that at any point $A \in \mathbb{CP}^1$, for any $V \in T_A\mathbb{CP}^1$,
\[
\partial_{\mathbb{CP}^1} \omega_M (V - \sqrt{-1} AV) = \omega_V - \sqrt{-1} \omega_{AV},
\]
\[
\partial_{\mathbb{CP}^1} \omega_M (V + \sqrt{-1} AV) = \omega_V + \sqrt{-1} \omega_{AV}.
\]
If we take $V = \frac{1}{2}\frac{\partial}{\partial x}$, then $V - \sqrt{-1} AV = \frac{\partial}{\partial z}$, $V + \sqrt{-1} AV = \frac{\partial}{\partial \bar{z}}$, and we conclude from the above that
\[
\partial_{\mathbb{CP}^1} \omega_M = dz \wedge \omega_{\frac{\partial}{\partial z}} = dz \wedge \left(\omega_{V} - \sqrt{-1} \omega_{AV}\right),
\]
\[
\bar{\partial}_{\mathbb{CP}^1} \omega_M = d\bar{z} \wedge \omega_{\frac{\partial}{\partial \bar{z}}} = d\bar{z} \wedge \left(\omega_V + \sqrt{-1} \omega_{AV}\right).
\]
Hence
\[
\sqrt{-1}\left(\partial_{\mathbb{CP}^1} \omega_M\right) \wedge \left(\bar{\partial}_{\mathbb{CP}^1} \omega_M\right) = \sqrt{-1} \, dz \wedge \omega_{\frac{\partial}{\partial z}} \wedge d\bar{z} \wedge \omega_{\frac{\partial}{\partial \bar{z}}} =
\]
\[
= \frac{\sqrt{-1} dz \wedge d\bar{z}}{\left(1+|z|^2\right)^2} \wedge \left(1+|z|^2\right) \omega_{\frac{\partial}{\partial z}} \wedge \left(1+|z|^2\right) \omega_{\frac{\partial}{\partial z}} = \omega_{\mathbb{CP}^1} \wedge \Psi \wedge \bar{\Psi}.
\]
We now compute the expression $\Psi \wedge \bar{\Psi} \wedge \omega_M^{n-3}$. To simplify things we only do the computation at the point $z = 1$, which corresponds to $I \in \mathbb{CP}^1$, where it takes the form
\[
\left(\omega_K + \sqrt{-1}\omega_J\right) \wedge \left(\omega_K - \sqrt{-1}\omega_J\right) \wedge \omega_I^{n-3} = \left(\omega_J + \sqrt{-1}\omega_K\right) \wedge \left(\omega_J - \sqrt{-1}\omega_K\right) \wedge \omega_I^{n-3},
\]
while at a general point $A \in \mathbb{CP}^1$ corresponding to $z \in \mathbb{C}$, an entirely analogous argument applies, except that $(I, J, K)$ need to be replaced by $\left(A, \frac{1+|z|^2}{2}\frac{\partial}{\partial x}, \frac{1+|z|^2}{2}\frac{\partial}{\partial y}\right)$, which form a quaternionic triple in the space $W = N_A \oplus T_A \mathbb{CP}^1$.

The vertical tangent space $T_mM$ to the point $(m, I) \in M \times \mathbb{CP}^1$ is a quaternionic vector space with respect to the triple $(I, J, K)$, so we can identify it with $\mathbb{H}^k$, where $k = \dim_\mathbb{H} M = \frac{1}{2}\dim_\mathbb{C} M = \frac{n}{2}$. The metric $g$ restricted to $T_mM$ is quaternionic-hermitian, hence we can find a quaternionic orthonormal basis $\left\{e_1, \ldots, e_k\right\}$ of $T_m M$; let $\left\{e_1^*, \ldots, e_k^*\right\}$ denote the dual basis of $\Lambda_m^1 M$. We define the following complex-valued 1-forms $\forall 1 \le i \le k$, which constitute a complex basis of $\Lambda_m^1 M \otimes \mathbb{C} = \Lambda_m^{1,0} M \oplus \Lambda_m^{0,1} M$, where the decomposition is relative to the complex structure $I$.
\[
\begin{array}{cc}
d\zeta := e_i^* + \sqrt{-1} Ie_i^* & d\xi = Je_i^* + \sqrt{-1}Ke_i^* \\[.1cm]
d\bar{\zeta} := e_i^* - \sqrt{-1} Ie_i^* & d\bar{\xi} = Je_i^* - \sqrt{-1}Ke_i^*
\end{array}
\]
With respect to this basis, it's not hard to see that the forms $\omega_I$, $\omega_J$, $\omega_K$ decompose as follows:
\[
\omega_I = \sum_{i=1}^k\left(\frac{\sqrt{-1}}{2} \, d\zeta_i \wedge d\bar{\zeta}_i + \frac{\sqrt{-1}}{2} \, d\xi_i \wedge d\bar{\xi}_i \right),
\]
\[
\omega_J = \sum_{i=1}^k\left(\frac{1}{2} \, d\zeta_i \wedge d\xi_i + \frac{1}{2} \, d\bar{\zeta}_i \wedge d\bar{\xi}_i \right),
\]
\[
\omega_K = \sum_{i=1}^k\left(-\frac{\sqrt{-1}}{2} \, d\zeta_i \wedge d\xi_i + \frac{\sqrt{-1}}{2} \, d\bar{\zeta}_i \wedge d\bar{\xi}_i \right).
\]
Further computing,
\[
\omega_J + \sqrt{-1}\omega_K = \sum_{i=1}^k d\zeta_i \wedge d\xi_i,
\]
hence
\[
\left(\omega_J + \sqrt{-1}\omega_K\right) \wedge \left(\omega_J - \sqrt{-1}\omega_K\right) \wedge \omega_I^{n-3} =
\]
\[
= \left(\sum_{i=1}^k d\zeta_i \wedge d\xi_i,\right) \wedge \left(\sum_{i=1}^k d\bar{\zeta}_i \wedge d\bar{\xi}_i,\right) \wedge \left\{\sum_{i=1}^k\left(\frac{\sqrt{-1}}{2} \, d\zeta_i \wedge d\bar{\zeta}_i + \frac{\sqrt{-1}}{2} \, d\xi_i \wedge d\bar{\xi}_i \right)\right\}^{n-3} =
\]
\[
= -\left(\frac{\sqrt{-1}}{2}\right)^{n-3}(n-1)(n-3)! \sum_{i=1}^k \left(\bigwedge_{j = 1}^k d\zeta_j \wedge d\bar{\zeta}_j\right) \wedge \left(\bigwedge_{j \ne i} d\xi_j \wedge d\bar{\xi}_j\right) -
\]
\[
-\left(\frac{\sqrt{-1}}{2}\right)^{n-3}(n-1)(n-3)! \sum_{i=1}^k \left(\bigwedge_{j \ne i} d\zeta_j \wedge d\bar{\zeta}_j\right) \wedge \left(\bigwedge_{j =1}^k d\xi_j \wedge d\bar{\xi}_j\right).
\]
On the other hand,
\[
\omega_I^{n-1} = \left\{\sum_{i=1}^k\left(\frac{\sqrt{-1}}{2} \, d\zeta_i \wedge d\bar{\zeta}_i + \frac{\sqrt{-1}}{2} \, d\xi_i \wedge d\bar{\xi}_i \right)\right\}^{n-1} =
\]
\[
= \left(\frac{\sqrt{-1}}{2}\right)^{n-1}(n-1)! \sum_{i=1}^k \left(\bigwedge_{j = 1}^k d\zeta_j \wedge d\bar{\zeta}_j\right) \wedge \left(\bigwedge_{j \ne i} d\xi_j \wedge d\bar{\xi}_j\right) +
\]
\[
+ \left(\frac{\sqrt{-1}}{2}\right)^{n-1}(n-1)! \sum_{i=1}^k \left(\bigwedge_{j \ne i} d\zeta_j \wedge d\bar{\zeta}_j\right) \wedge \left(\bigwedge_{j =1}^k d\xi_j \wedge d\bar{\xi}_j\right).
\]
We conclude that
\[
\left(\omega_J + \sqrt{-1}\omega_K\right) \wedge \left(\omega_J - \sqrt{-1}\omega_K\right) \wedge \omega_I^{n-3} = \frac{4}{n-2} \omega_I^{n-1}
\]
at the point $z = 1$, and generally, $\Psi \wedge \bar{\Psi} \wedge \omega_m^{n-3} = \frac{4}{n-2} \omega_M^{n-1}$. We thus have
\[
2\sqrt{-1}(n-1)(n-2)\left(\partial_{\mathbb{CP}^1} \omega_M\right) \wedge \left(\bar{\partial}_{\mathbb{CP}^1} \omega_M\right) \wedge \omega_M^{n-3} =
\]
\[
= 2(n-1)(n-2) \, \omega_{\mathbb{CP}^1} \wedge \Psi \wedge \bar{\Psi} \wedge \omega_M^{n-3} =
\]
\[
= 8(n-1) \, \omega_{\mathbb{CP}^1} \wedge \omega_M^{n-1},
\]
and so if $n > 2$,
\[
d_{\mathbb{CP}^1} d_{\mathbb{CP}^1}^c \left(\omega_M^{n-1}\right) = 4(n-1) \, \omega_{\mathbb{CP}^1} \wedge \omega_M^{n-1},
\]
which is strictly positive on vertical forms, hence applying Lemma 2, we get a $T > 0$ such that
\[
T \, \omega_M^n + dd^c\left(\omega_M^{n-1}\right)
\]
is strictly positive.

Thus both in case $n = 2$ and $n > 2$, we are assured of the existence of a closed strictly positive $(n, n)$-form on $Z$, which immediately implies that $Z$ is balanced by Lemma \ref{thm:positive}. We are finished.
\end{proof}

We conclude with a short corollary demonstrating that the K\"ahler condition is too strong for twistor spaces, as opposed to balancedness.

\theoremstyle{plain}
\newtheorem{hyperc-nonkahler}{Corollary}
\begin{hyperc-nonkahler}
Let $(M, I, J, K, g_M)$ be a compact hyperk\"ahler manifold of complex dimension $n$. Then its twistor space $Z$ is never K\"ahler.
\end{hyperc-nonkahler}

\begin{proof}
In the notations of the previous proof, we have $d _M \omega_M = d_M^c \omega_M = 0$ by the hyperk\"ahler condition on $M$, hence
\[
dd^c \omega_M = \sqrt{-1}\partial_{\mathbb{CP}^1} \bar{\partial}_{\mathbb{CP}^1} \omega_M = -2 \, \omega_{\mathbb{CP}^1} \wedge \omega_M, 
\]
hence if $\omega_Z$ is any K\"ahler form on $Z$,
\[
-dd^c \omega_M \wedge \left(\omega_Z\right)^{n-1}
\]
is an exact strictly positive $(n+1, n+1)$-form on $Z$, in the sense that it is a strict positive multiple of the volume form $\Omega_Z$. But this is impossible, since then
\[
-\int_Z d\left(d^c\omega_M \wedge \left(\omega_Z\right)^{n-1} \right) = -\int_Z dd^c \omega_M \wedge \left(\omega_Z\right)^{n-1} > 0
\]
by compactness, whereas the first integral is zero by Stokes' theorem.
\end{proof}

\end{document}